\newtheoremstyle{theorem}%name
  {10pt}          % space above
  {10pt}  % space below
  {\sl}  % bofy font
  {\parindent}     % ident - empty=no indent,  \parindent= paragraph indent
  {\bf}  % thm head font
  {. }    % punctuation after thm head
  { }    % space after thm head: `` ``=normal \newline=linebreak
  {}     % thm head specification
\theoremstyle{theorem}
\newtheorem{theorem}{Theorem}
\newtheorem{proposition}[theorem]{Proposition}
\newtheorem{example}[theorem]{Example}
\newtheoremstyle{defi}%name
  {10pt}          % space above
  {10pt}  % space below
  {\rm}  % bofy font
  {\parindent}     % ident - empty=no indent,  \parindent= paragraph indent
  {\bf}  % thm head font
  {. }    % punctuation after thm head
  { }    % space after thm head: `` ``=normal \newline=linebreak
  {}     % thm head specification
\theoremstyle{defi}
\newtheorem{definition}[theorem]{Definition}
\begin{document}

\title{\large\bf  On Multiplicative Fractional Calculus}
\author{\small \bf Thabet Abdeljawad \\  Mathematics and General Sciences Department \\ Prince Sultan University,  \\11586 , Riyadh, Kingdom of Saudi Arabia}
\maketitle

\begin{abstract}We set the main concepts for multiplicative fractional calculus. We define Caputo, Riemann and Letnikov multiplicative fractional derivatives and multiplicative fractional integrals and study some of their properties. Finally, the multiplicative analogue of the local conformable fractional derivative and integral is studied.

 {\bf AMS Subject Classification: } .

{\bf Key Words and Phrases:} Multiplicative Riemann and Caputo fractional derivatives, Multiplicative fractional integrals,Letnikov multiplicative fractional integrals and derivatives, Multiplicative conformable fractional derivative .

\end{abstract}

\section{Introduction} \label{s:1}

Multiplicative calculus has a relatively restrictive area of applications than the calculus of Newton and Leibnitz. Indeed, it covers only positive functions. However, the necessity of developing and applying multiplicative calculus resembles the importance of the polar coordinates while rectangular systems already exist. Also, it is of our thought that multiplicative calculus is a useful mathematical tool for economics and finance and other branches of science because several interpretations are given by means of multiplicative derivatives where the logarithmic scale appears. Actually, many of the scientific tables are given in logarithmic scales. For example, the level of sound signals, the acidities of chemicals, and the magnitude earthquakes are all given in logarithmic scales. In fact, an important work of Benford \cite{Benford}that was published in 1938 described  many physical quantities in the nature that are exponentially varying type.Indeed, Benford made a statistical work on twenty different tables containing about twenty thousand quantities,which had been taken from the real world. He observed that such quantities representing the surface area of rivers, the population figures of countries, the electricity bills, the stock prices in markets, etc. are all of exponential type varying \cite{Tarantola}.

The multiplicative calculus, which mainly deals with exponentially varying functions, was firstly proposed by mathematical biologists Volterra and Hostinsky \cite{Volterra} in 1938. For more works later about multiplicative calculus we refer to \cite{Grossman1,Grossman2, Stanley, Bashirov, Ali}.
An interesting application where the theory  of multiplicative calculus cab help more than Newtonian Calculus can be found in \cite{Bashirov}. The authors there, used  multiplicative calculus to prove easier that that the Cauchy's function is infinitely many times differentiable on $\mathbb{R}$ but not analytic.

Fractional calculus, the calculus of differentiation and integration of arbitrary order, started to attract scientists, in the last few decades, in different fields of engineering and science extensively with variety of applications \cite{Pod, Fat}. Moreover, the discrete counterpart of fractional calculus was the field of attention for many researchers in the last two decades (see \cite{Thabet} and the references therein).

In this work, we bring together multiplicative calculus and fractional calculus. The basic types of fractional integrals and derivatives will be defined and described to understand how exponential varying quantities look-like with arbitrary order reflected by delay effects.

\section{Short Preliminaries: Multiplicative Calculus and Fractional Calculus  } \label{s:2}
\begin{definition}
Let $f(x)$ be a positive real valued function. Then, the (forward) multiplicative derivative of $f(x)$ is defined by
\begin{equation}\label{d1}
  \frac{d^*}{dx^*}f(x)=f^*(x)=\lim_{h\rightarrow 0}(\frac{f(x+h)}{f(x)})^{1/h},~~~f(x) \in \mathbb{R}^+
\end{equation}

\end{definition}
Now, for our purposes in fractionalizing the multiplicative derivative we define the (backward)multiplicative derivative.
\begin{definition}
Let $f(x)$ be a positive real valued function. Then, the (backward) multiplicative derivative of $f(x)$ is defined by
\begin{equation}\label{d2}
  \frac{d_*}{dx_*}f(x)=f_*(x)=\lim_{h\rightarrow 0}(\frac{f(x)}{f(x-h)})^{1/h},~~~f(x) \in \mathbb{R}^+
\end{equation}
Inductively, it can be shown that $$ f^*~ ^{(n)}(x)=f_*~ ^{(n)}(x)=exp(\frac{d^n}{dx^n}ln(f(x)))$$
\end{definition}
Both the backward and forward multiplicative derivatives are related to the usual derivative by
\begin{equation}\label{u}
   \frac{d^*}{dx^*}f(x)=\frac{d_*}{dx_*}f(x)= exp(\frac{d}{dx}ln f(x))=exp(\frac{f^\prime(x)}{f(x)}).
\end{equation}
This can be shown easily either starting from the definition or by applying the natural logarithm to both sides in the definition and taking the limit to both sides and making use of that $f(x)$ is differentiable by the help of L'Hopital rule.
For the sake of being convenient in the multiplicative fractional cases we shall write $(\ominus \frac{d_*}{dx_*})^n f(x)=(\ominus f_*)^{(n)}(x)=e^{(-\frac{d}{dx})^n(ln\circ f)(x)}$.

The forward or backward multiplicative integrals are given by
\begin{equation}\label{m1}
  \int_a^b f(x)^{dx}=\int_a^b f(x)_{dx}=exp (\int_a^b lnf(x)dx).
\end{equation}
Of course the backward and forward multiplicative integrals may differ when the time scale is different from $\mathbb{R}$.

Let $[a,b]$ be a finite interval in $\mathbb{R}$. The left Riemann-Liouville fractional integral $(~_{a}I^{\alpha}f)(x)$ of order $\alpha \in \mathbb{C}$, $Re(\alpha)>0$ starting from $a$ is defined by
\begin{equation}\label{r1}
 ( ~_{a}I^{\alpha}f)(x)=\frac{1}{\Gamma(\alpha)}\int_a^x (x-t)^{\alpha-1}f(t)dt,~~~x>a.
\end{equation}

and the right Riemann-Liouville fractional integral by
\begin{equation}\label{r2}
 ( I_b^{\alpha}f)(x)=\frac{1}{\Gamma(\alpha)}\int_x^b (t-x)^{\alpha-1}f(t)dt,~~~~x<b.
\end{equation}
For $\alpha=n \in \mathbb{N}$, equations (\ref{r1}) and (\ref{r2}) coincide (by means of Cauchy formula) with the $nth$ integrals of the form

\begin{eqnarray}
 \nonumber
  ( ~_{a}I^nf)(x) &=& \int_a^x dt_1 \int_a^{t_1}dt_2...\int_a^{t_{n-1}} f(t_n) dt_n\\
   &=& \frac{1}{(n-1)!}\int_a^x (x-t)^{(n-1)}f(t)dt,~~~x>a
\end{eqnarray}
and

\begin{eqnarray}
 \nonumber
  ( I_b^nf)(x) &=& \int_x^b dt_1 \int_{t_1}^b dt_2...\int_{t_{n-1}}^b f(t_n) dt_n \\
   &=&\frac{1}{(n-1)!}\int_x^b (t-x)^{n-1}f(t)dt,~~~~x<b.
\end{eqnarray}

The left and right Riemann-Liouville fractional derivatives of order $\alpha,~~Re(\alpha)>0$ are defined by

\begin{equation}\label{b1}
 ( _{a}D^{\alpha}f)(x)= \frac{d^n }{dx^n}(_{a}I^{n-\alpha}f)(x)=\frac{1}{\Gamma(n-\alpha)}\frac{d^n}{dx^n}\int_a^x (x-t)^{n-\alpha-1}f(t)dt,~x>a.
\end{equation}
and
\begin{equation}\label{b2}
 ( D_b^{\alpha}f)(x)= (-1)^n\frac{d^n }{dx^n}(I_b^{n-\alpha}f)(x)=\frac{(-1)^n}{\Gamma(n-\alpha)}\frac{d^n}{dx^n}\int_x^b (t-x)^{n-\alpha-1}f(t)dt,~x<b.
\end{equation}
Here, $n=[Re(\alpha)]+1$, where $[x]$ means the greatest integer   less than or equal to $x$.
The  left and right Caputo fractional derivatives are defined by
\begin{equation}\label{Cap}
  (_{a}^{C}D^{\alpha}f)(x)=(_{a}D^{\alpha }(f(t)-\sum_{k=0}^{n-1}\frac{f^{(k)}(a)}{k!}(t-a)^k)(x)
\end{equation}

and
\begin{equation}\label{Capr}
  (^{C}D_b^{\alpha}f)(x)=(D_b^{\alpha }(f(t)-\sum_{k=0}^{n-1}\frac{f^{(k)}(b)}{k!}(b-t)^k)(x)
\end{equation}
In the space of $n$-th order absolutely continuous functions the definitions (\ref{Cap}) and (\ref{Capr}) can be represented by the modified versions of the following Caputo fractional derivatives:
\begin{equation}\label{mCap}
  (_{a}^{C}D^{\alpha}f)(x)= (~_{a}I^{n-\alpha} f^{(n)})(x)
\end{equation}
called (modified left Caputo fractional derivative )
and
\begin{equation}\label{mCapr}
  (^{C}D_b^{\alpha}f)(x)=(I_b^{n-\alpha} (-1)^nf^{(n)})(x)
\end{equation}
called (modified right Caputo fractional derivative ).

For the conformable fractional calculus concepts and notations we refer to \cite{Roshdi} and \cite{Thabet conf}.
The integrating by parts plays an important role in proving these modified representations.
\section{Main Results: Multiplicative Riemann-Liouville Fractional Integrals and Derivatives} \label{s:3}

Let us define the multiplicative left type iterated integrals $( ~_{a}I_*^nf)(x) = \int_a^x (_{a}I_*^{n-1}f)(t)_{dt}=e^{\int_a^x ln((_{a}I_*^{n-1}f)(t))dt }, n=1,2,3,...$, where $(_{a}I_*^1f)(x)=\int_a^x f(t)_{dt}=e^{\int_a^x (ln\circ f)(t)dt}=e^{(_{a}I^1(ln\circ f)(x)}$ and $(_{a}I_*^0 f)(x)=f(x)$. Similarly, define the right ones as
$( ~_{*}I_b^nf)(x) = \int_x^b ({*}I_b^{n-1}f)(t)_{dt}=e^{\int_x^b ln((I_b^{n-1}f)(t))dt }, n=1,2,3,...$, where $(_{*}I_b^1f)(x)=\int_x^b f(t)_{dt}=e^{\int_x^b (ln\circ f)(t)dt}$ and $(_{*}I_b^0 f)(x)=f(x)$.
If we proceed inductively for $n$, we can easily see that
\begin{equation}\label{e1}
  (~_{a}I_*^nf)(x)=e^{(~_{a}I^n(ln\circ f))(x)}.
\end{equation}
and
\begin{equation}\label{e2}
( _{*}I_b^nf)(x)=e^{ (I_b^n(ln \circ f))(x)}.
\end{equation}
Hence, we can define left and right multiplicative Riemann-Liouville fractional integrals as follows.
\begin{definition}
The multiplicative left Riemann-Liouville fractional integral $(~_{a}I_*^{\alpha}f(x))$ of order $\alpha \in \mathbb{C}$, $Re(\alpha)>0$ starting from $a$ is defined by
\begin{equation}\label{m1}
  (~_{a}I_*^{\alpha}f)(x)=e^{(~_{a}I^{\alpha}(ln\circ f))(x)}.
\end{equation}
and the multiplicative right one is defined by
\begin{equation}\label{m2}
  ( _{*}I_b^{\alpha}f)(x)=e^{ (I_b^{\alpha}(ln \circ f))(x)}
\end{equation}
\end{definition}
Now, we can define the left and right multiplicative Riemann-Liouville fractional derivatives $(_{a}D_*^{\alpha} f)(x)$ and $(_{*}D_b^{\alpha} f)(x)$ as follows.

\begin{definition}The left multiplicative  Riemann-Liouville fractional derivative for $f(x)$ of order $\alpha \in \mathbb{C}$, $Re(\alpha)>0$ starting from $a$ is defined by
\begin{eqnarray}
\nonumber
   (_{a}D_*^{\alpha} f)(x) &=& (\frac{d_*}{dx_*})^n (_{a}I_*^{n-\alpha}f)(x)=e^{ (\frac{d}{dx})^n ln(~_{a}I_*^{n-\alpha}f)(x) ) }\\
   &=& e^{  (\frac{d}{dx})^n (~_{a}I^{n-\alpha} (ln\circ f) (x)}= e^{(~_{a}D^{\alpha}(ln\circ f)(x)  }
\end{eqnarray}
and the right one is defined by
\begin{eqnarray}
\nonumber
   (_{*}D_b^{\alpha} f)(x) &=& (\ominus \frac{d_*}{dx_*})^n (_{*}I_b^{n-\alpha}f)(x)=e^{ (-\frac{d}{dx})^n ln(~_{*}I_b^{n-\alpha}f)(x) ) }\\
   &=& e^{  (-\frac{d}{dx})^n (~I_b^{n-\alpha} (ln\circ f) (x)}= e^{(D_b^{\alpha}(ln\circ f)(x)  }
\end{eqnarray}
\end{definition}
In the next example we find the mutliplicative Riemann-Liouville fractional integrals and derivatives Riemann-Liouville for certain positive functions.
\begin{example}
If $Re(\alpha)\geq 0$ and $\beta \in \mathbb{C}$ ($Re(\beta)>0$), then\begin{eqnarray}
                                                                      % \nonumber to remove numbering (before each equation)
                                                                       ( _{a}I_*^{\alpha} e^{(t-a)^{\beta-1}})(x) &=& e^{\frac{\Gamma(\beta)}{\Gamma(\alpha+\beta)}(x-a)^{\alpha+\beta+1}}, ~~Re(\alpha)> 0\\
                                                                        ( _{a}D_*^{\alpha} e^{(t-a)^{\beta-1}})(x) &=& e^{\frac{\Gamma(\beta)}{\Gamma(\alpha-\beta)}(x-a)^{\alpha-\beta+1}},~~Re(\alpha)\geq 0 \\
                                                                        ( _{*}I_b^{\alpha} e^{(b-t)^{\beta-1}})(x) &=& e^{\frac{\Gamma(\beta)}{\Gamma(\alpha+\beta)}(b-x)^{\alpha+\beta+1}},~~Re(\alpha)> 0 \\
                                                                         ( _{*}D_b^{\alpha} e^{(b-t)^{\beta-1}})(x) &=& e^{\frac{\Gamma(\beta)}{\Gamma(\alpha-\beta)}(b-x)^{\alpha-\beta+1}},~~Re(\alpha)\geq 0
                                                                      \end{eqnarray}
\end{example}
One can note that the multiplicative derivative for the constant function is $1$. However, it is not the case for Riemann-Liouville multiplicative fractional derivative case. Indeed, for example
\begin{equation}\label{ccc}
  ( _{a}D_*^{\alpha} e)(x) = e^{\frac{1}{\Gamma(1-\alpha)}(x-a)^{-\alpha}},~~Re(\alpha)\geq 0
\end{equation}
It is easy to verify that the multiplicative Riemann-Liouville fractional derivative behaves well in case of multiplication of functions. Indeed,
\begin{equation}\label{x}
  ( _{a}D_*^{\alpha} fg)(x)=( _{a}D_*^{\alpha}f)(x).( _{a}D_*^{\alpha} g)(x).
\end{equation}
\section{Main Results: Multiplicative Letnikov Fractional Integrals and Derivatives} \label{s:4}
Let us consider a continuous positive function $f(x)$. From the definition of the (backward) multiplicative derivative we have
\begin{equation}\label{one}
  f_*(t)=\lim_{h\rightarrow 0}(\frac{f(t)}{f(t-h)})^{1/h}
\end{equation}
Applying this definition twice gives the second order multiplicative derivative:
\begin{equation}\label{two}
  f_*^{(2)}(t)=\lim_{h\rightarrow 0}(\frac{f(t)f(t-2h)}{f(t-h)^2})^{1/h^2}.
\end{equation}
If we proceed inductively, we reach at
\begin{equation}\label{three}
 f_*^{(n)}(t)=\lim_{h\rightarrow 0}~_{*}f_h^{(n)},
\end{equation}
where
\begin{equation}\label{four}
  _{*}f_h^{(n)}=\left( \prod_{r=0}^n f(t-rh)^{(-1)^r \left(
                                                  \begin{array}{c}
                                                    n \\
                                                    r \\
                                                  \end{array}
                                                \right)
  }\right)^{\frac{1}{h^n}},
\end{equation}
and $ \left(
                                                  \begin{array}{c}
                                                    n \\
                                                    r \\
                                                  \end{array}
                                                \right)
  =\frac{n(n-1)...(n-r+1)}{r! }$ is the usual for the binomial coefficient.
  Let us now consider the following generalizing expression:
  \begin{equation}\label{gen}
    _{*}f_h^{(p)}=\left(\prod_{r=0}^n f(t-rh)^{(-1)^r \left(
                                                  \begin{array}{c}
                                                    p \\
                                                    r \\
                                                  \end{array}
                                                \right)
  }\right)^{\frac{1}{h^p}},
  \end{equation}
  where $p$ is an arbitrary integer.
  Notice that
  \begin{equation}\label{not}
   _{*}f_h^{(p)}=e^{g_h^{(p)}},
  \end{equation}
  where $g(t)=ln(f(t))$ and $g_h^{(p)}=\frac{1}{h^p} \sum_{r=0}^n g(t-rh)(-1)^r \left(
                                                  \begin{array}{c}
                                                    p \\
                                                    r \\
                                                  \end{array}
                                                \right)$ is the fraction used to obtain the usual Letnikov fractional integrals and derivatives (see \cite{Pod}, sec. 2.2.).
  Obviously, for $p\leq n$, we have

  \begin{equation}\label{clear}
    lim_{h\rightarrow 0} ~ _{*}f_h^{(p)}(t)=f_*^{(p)}(t)
  \end{equation}

  To define the Letnikov fractional integrals, let us consider the negative values of $p$. For this purpose denote

  \begin{equation}\label{den}
    [\begin{array}{c}
       p \\
       r
     \end{array}
    ]=\frac{p(p+1)...(p+r-1)}{r!}.
  \end{equation}
  Then, clearly we have
  \begin{equation}\label{cl}
  (\begin{array}{c}
       -p \\
       r
     \end{array}
  )=(-1)^r[\begin{array}{c}
       p \\
       r
     \end{array}
    ].
  \end{equation}
  Thus, we can express
  \begin{equation}\label{exp}
    _{*}f_h^{(-p)}(t)=\left(\prod_{r=0}^n f(t-rh)^{\left [
                                                  \begin{array}{c}
                                                    p \\
                                                    r \\
                                                  \end{array}
                                                \right]
  }\right)^{h^p},
  \end{equation}
  If $n$ is fixed, then $_{*}f_h^{(-p)}$  tends to the uninteresting limit $1$ as $h\rightarrow 0$. To arrive to a nonzero limit, we have to suppose that $n\rightarrow \infty$ as $h\rightarrow 0$. Thus we can take $h=\frac{t-a}{n}$ and then denote
  \begin{equation}\label{andd}
    \lim_{h\rightarrow 0, nh=t-a} ~_{*}f_h^{(-p)}(t)=(~_{a}D_*^{-p}f)(t)
  \end{equation}
  Now making use of (\ref{not}) and proceeding as in \cite{Pod} (Chapter 2), we come to the representation:
  \begin{equation}\label{repr}
    (~_{a}D_*^{-p}f)(t)=(~_{a}I_*^p f)(t)
  \end{equation}
  Finally, we give the definition of multiplicative Letnikov fractional derivatives.

  \begin{definition}
  For $\alpha>0$, we define the left and right multiplicative Letnikov fractional derivatives, respectively by:
  \begin{equation}\label{the left}
   ( _{a}D_*^{\alpha)}f)(t)=\lim_{h\rightarrow 0^+} \left(   \prod_{r=0}^{[\frac{t-a}{h}]} f(t-rh)^{(-1)^r \left(
                                                                                                            \begin{array}{c}
                                                                                                              \alpha \\
                                                                                                              r \\
                                                                                                            \end{array}
                                                                                                          \right)
    }
\right)^{\frac{1}{h^{\alpha}} }
    \end{equation}
    and
    \begin{equation}\label{the right}
      ( _{*}D_b^{\alpha)}f)(t)=\lim_{h\rightarrow 0^+} \left(   \prod_{r=0}^{[\frac{b-t}{h}]} f(t+rh)^{(-1)^r \left(
                                                                                                            \begin{array}{c}
                                                                                                              \alpha \\
                                                                                                              r \\
                                                                                                            \end{array}
                                                                                                          \right)
    }
\right)^{\frac{1}{h^{\alpha}} }.
    \end{equation}
  \end{definition}
Where, $\left(
                                                                                                            \begin{array}{c}
                                                                                                              \alpha \\
                                                                                                              r \\
                                                                                                            \end{array}
                                                                                                          \right)=\frac{\Gamma(\alpha+1)} {\Gamma(\alpha-r+1).\Gamma (r+1)}$.
\section{Main Results: Multiplicative Caputo Fractional Derivatives} \label{s:4}
Its well-known that the multiplicative derivative of the constant function is $1$. In Section 2, we noticed that the multiplicative fractional Riemann-Liouville derivative for the constant function was not $1$. In this section we shall define the Caputo multiplicative fractional derivative with the property that it assigns the constant function to $1$.

\begin{definition} \label{xx} Let $\alpha \in \mathbb{C}$ such that $Re(\alpha)>0$. Let $n=[Re(\alpha)]+1$. Then the left (modified)Caputo multiplicative fractional derivative of the positive real valued function $f$ is defined by

\begin{equation}\label{mod}
 (_{a}^{C}D_*^{\alpha}f)(t)=(_{a}I^{n-\alpha}_* f_*^{(n)})(t)
\end{equation}
\end{definition}
 From Definition \ref{xx}, the definition of multiplicative fractional integrals and the definition of multiplicative derivative, we can easily see that

 \begin{eqnarray}
 \nonumber
   (~_{a}^{C}D_*~^{\alpha}f)(t) &=& (~_{a}I^{n-\alpha} _* f_* ^{(n)})(t) \\ \nonumber
   &=& e^{(~_{a}I^{n-\alpha}ln\circ f_*~^{(n)}})(t)  = e^{ (~_{a}I^{n-\alpha}ln(e^{\frac{d^n}{dx^n} ln\circ f}) } (t) \\
    &=& e^{ (~_{a}I^{n-\alpha} \frac{d^n}{dx^n}ln\circ f })(t)=  e^{(~_{a}^{C}D^{\alpha} ln \circ f)(t)}
 \end{eqnarray}
Similarly, in the right case we define the right (modified) multiplicative Caputo fractional derivative by $ (~_{*}^{C}D_b~^{\alpha}f)(t) = (~_{*}I^{n-\alpha} _b (\ominus f_*)^{(n)})(t)$ and we can show that
\begin{equation}\label{we can}
  (~_{*}^{C}D_b~^{\alpha}f)(t)=e^{(^{C}D_b^{\alpha} ln \circ f)(t)}
\end{equation}
Thus, the (modified) Caputo multiplicative fractional derivative also behaves like the multiplicative  Riemann one and usual multiplicative one by being lifted to the exponential and logarithm functions.

Finally, we give analogous multiplicative  representations to (\ref{Cap}) and (\ref{Capr}).
From (\ref{we can}) and (\ref{Cap}) applied to $g(t)=ln(f(t))$ we have
\begin{eqnarray}
\nonumber
 (_{a}^{C}D_*~^{\alpha}f)(t) &=&  e^{(~_{a}^{C}D^{\alpha} ln \circ f)(t)} \\ \nonumber
  &=&e^{  ~_{a}D ^ {\alpha}\left[  g(t)-\sum_{k=0}^{n-1}\frac{g^{(k)}(a)}{k!}(t-a)^k\right]} \\ \nonumber
   &=& e^{(~_{a}D ^ {\alpha}g)(t)}.e^{-(~_{a}D ^ {\alpha}S_g)(t)}=e^{(~_{a}D ^ {\alpha}g)(t)}.e^{(~_{a}D ^ {\alpha} ln\circ e^{-S_g})(t)}\\ \nonumber
   &=& ~_{a}D_* ^{\alpha} (f(t)e^{-S_g(t)}),
\end{eqnarray}
where $S_g(t)=\sum_{k=0}^{n-1}\frac{g^{(k)}(a)}{k!}(t-a)^k$. Also, notice that
\begin{equation}\label{finall}
  e^{-S_g(t)}=\prod_{k=0}^{n-1} e^{-\frac{g^{(k)}(a)}{k!}(t-a)^k}
\end{equation}
Next theorem describes the action of the multiplicative fractional integral to multiplicative (modified ) Caputo fractional derivative.
\begin{theorem} Let $f$ be a positive real-valued function, $g(t)=ln(g(t))$ and let $n=[Re(\alpha)]+1$. Then
\begin{itemize}
  \item a)$(~_{a}I_*^{\alpha} ~_{a}^{C}D_*^{\alpha}f)(t)=\frac{f(t)}{\prod_{k=0}^{n-1} e^{ \frac{g^{(k)}(a)(t-a)^k}{k!}  }}$, $t>a$.
  \item b) $(~_{*}I_b^{\alpha} ~_{*}^{C}D_b^{\alpha}f)(t)=\frac{f(t)}{\prod_{k=0}^{n-1} e^{ (-1)^k\frac{g^{(k)}(b)(b-t)^k}{k!}  }}$, $t<b $.
\end{itemize}
\end{theorem}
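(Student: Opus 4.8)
The plan is to reduce everything to the classical (additive) Caputo theory by exploiting the exponential/logarithmic lifting already established above. Writing $g=\ln\circ f$, I would first recall from the definition of the multiplicative Riemann--Liouville fractional integral that $(~_{a}I_*^{\alpha}F)(t)=e^{(~_{a}I^{\alpha}(\ln\circ F))(t)}$ for any positive $F$, and from the computation preceding the theorem that $\ln\circ(~_{a}^{C}D_*^{\alpha}f)=~_{a}^{C}D^{\alpha}g$. Composing these two identities immediately gives
\[
(~_{a}I_*^{\alpha}\,~_{a}^{C}D_*^{\alpha}f)(t)=e^{\left(~_{a}I^{\alpha}\,~_{a}^{C}D^{\alpha}g\right)(t)},
\]
so the whole of part (a) is just the exponential of the corresponding classical statement for $g$.

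Next I would invoke the classical fundamental relation for the Caputo derivative: if $g\in AC^{n}[a,b]$ (exactly the regularity under which the modified Caputo derivative of Definition \ref{xx} is meaningful), then $(~_{a}I^{\alpha}\,~_{a}^{C}D^{\alpha}g)(t)=g(t)-\sum_{k=0}^{n-1}\frac{g^{(k)}(a)}{k!}(t-a)^{k}$; see \cite{Pod}. Substituting this into the display above, using $e^{g(t)}=f(t)$ and the factorization $e^{-S_g(t)}=\prod_{k=0}^{n-1}e^{-\frac{g^{(k)}(a)}{k!}(t-a)^{k}}$ recorded in (\ref{finall}), yields
\[
(~_{a}I_*^{\alpha}\,~_{a}^{C}D_*^{\alpha}f)(t)=\frac{f(t)}{\prod_{k=0}^{n-1}e^{\frac{g^{(k)}(a)(t-a)^{k}}{k!}}},
\]
which is (a). For part (b) I would run the identical argument at the right endpoint: by (\ref{m2}) and (\ref{we can}) one has $(~_{*}I_b^{\alpha}\,~_{*}^{C}D_b^{\alpha}f)(t)=e^{(I_b^{\alpha}\,^{C}D_b^{\alpha}g)(t)}$, and the right-sided classical counterpart gives $(I_b^{\alpha}\,^{C}D_b^{\alpha}g)(t)=g(t)-\sum_{k=0}^{n-1}\frac{(-1)^{k}g^{(k)}(b)}{k!}(b-t)^{k}$, which is precisely (\ref{Capr})--(\ref{mCapr}) read for $g$. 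Exponentiating and factoring as before produces the claimed formula with the extra sign $(-1)^{k}$.

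I do not expect a genuine obstacle here; the only point requiring care is bookkeeping. First, one must check that the standing hypotheses on $f$ guarantee $g=\ln\circ f\in AC^{n}[a,b]$, so that the classical composition identity $I^{\alpha}\,{}^{C}D^{\alpha}g=g-(\text{Taylor polynomial at }a)$ genuinely applies --- this is where positivity of $f$ and smoothness enter. Second, in the right-sided case one must track the sign pattern $(-1)^{k}(b-t)^{k}$ carefully: it arises jointly from differentiating the powers $(b-t)^{k}$ and from the factor $(-1)^{n}$ in the definition of the right Caputo derivative. Everything else is a direct transcription of known additive-calculus facts through the $\exp/\ln$ correspondence.
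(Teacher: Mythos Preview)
Your proposal is correct and follows essentially the same route as the paper: reduce via the $\exp/\ln$ lifting to $(~_{a}I_*^{\alpha}\,~_{a}^{C}D_*^{\alpha}f)(t)=e^{(~_{a}I^{\alpha}\,~_{a}^{C}D^{\alpha}g)(t)}$ and then invoke the classical identity $(~_{a}I^{\alpha}\,~_{a}^{C}D^{\alpha}g)(t)=g(t)-\sum_{k=0}^{n-1}\frac{g^{(k)}(a)}{k!}(t-a)^{k}$ (and its right-sided analogue). The paper's proof is exactly this, stated in one line as ``by definition and the classical formula''; your version simply unpacks the steps and adds the regularity caveat $g\in AC^{n}[a,b]$, which is a sensible clarification.
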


\begin{proof}
The proof of (a) follows by definition and that $(~_{a}I^{\alpha} ~_{a}^{C}D^{\alpha}g)(t)=g(t)-\sum_{k=0}^{n-1} \frac{g^{(k)}(a)(t-a)^k }{k!}$. The proof of (b) follows by definition and that $(I_b^{\alpha} ~^{C}D_b^{\alpha}g)(t)=g(t)-\sum_{k=0}^{n-1} \frac{(-1)^k g^{(k)}(b)(b-t)^k}{k!}$.
\end{proof}
\section{The multiplicative local conformable fractional derivative}

Finally, we remark the multiplicative local (conformable) fractional derivatives and integrals. For local (confromable) fractional derivative we refer to \cite{Roshdi} and \cite{Thabet conf}.

For a positive real-valued function $f$ defined on $[a,b]$, we define the multiplicative left and right conformable fractional derivatives of order $0<\alpha \leq 1$ starting from $a$ and ending at $b$, respectively, by
\begin{equation}\label{confleft}
  (^{*}T_{\alpha}^af)(t)=\lim_{\epsilon\rightarrow 0}\left (\frac{f(t+\epsilon (t-a)^{1-\alpha})}{f(t)}\right)^{1/\epsilon}
\end{equation}
and
\begin{equation}\label{confright}
  (_{\alpha}^{b}T^*f)(t)=\lim_{\epsilon\rightarrow 0}\left (\frac{f(t+\epsilon (b-t)^{1-\alpha})}{f(t)}\right)^{-1/\epsilon}
\end{equation}
Applying the logarithm to both sides in (\ref{confleft}) and (\ref{confright}),  using the fact that $(T_{\alpha}^af)(t)=(t-a)^{1-\alpha}f^\prime (t)$ and $(_{\alpha}^{b}Tf)(t)=-(b-t)^{1-a\alpha}f^\prime(t)$ \cite{Thabet conf}, with an application of L'Hopital rule, we conclude that $(^{*}T_{\alpha}^af)(t)=e^{(T_{\alpha}^aln\circ f)(t)}=e^{\frac{(T_{\alpha}^af)(t)}{f(t)}}$ and $(_{\alpha}^{b}T^*f)(t)=e^{(_{\alpha}^{b}Tln\circ f)(t)}=e^{\frac{(_{\alpha}^{b}Tf)(t)} {f(t)}}$.

For $0< \alpha \leq 1 $ and a positive real-valued function $f$ the left and right multiplicative conformable fractional integrals are defined are defined by

\begin{equation}\label{m conf integ l}
(~^{*}I_{\alpha}^a f)(t)=\int_a^t f(x)_{d^*_{\alpha}(x,a)}=\int_a^t f(x)^{(x-a)^{\alpha-1}}_{dx}=e^{\int_a^t (x-a)^{\alpha-1}ln(f(x))dx}
\end{equation}

and

\begin{equation}\label{m conf integ r}
(~^{b}_{\alpha}I^* f)(t)=\int_t^b f(x)_{d^*_{\alpha}(b,x)}=\int_t^b f(x)^{(b-x)^{\alpha-1}}_{dx}=e^{\int_t^b (b-x)^{\alpha-1}ln(f(x))dx},
\end{equation}
respectively.

To define the higher order multiplicative conformable fractional derivative and integrals we proceed as in \cite{Thabet conf}. Indeed, for $\alpha \in (n,n+1]$ and $\beta=\alpha-n$, we define

\begin{equation}\label{aa}
  (^{*}\textbf{T}_{\alpha}^af)(t)=(^{*}T_{\beta}^af_*^{(n)})(t)=e^{T_{\beta}^a ln (f_*^{(n)}(t) )}=e^{T_{\beta}^a \frac{d^n}{dt^n}ln(f(t))}.
\end{equation}

\begin{equation}\label{bb}
  (^{b}_{\alpha}\textbf{T}^*f)(t)=(~^{b}_{\beta}T^* f_*^{(n)})(t)=e^{~_{\beta}^{b}T ln (f_*^{(n)}(t) )}=e^{~_{\beta}^{b} T \frac{d^n}{dt^n}ln(f(t))}.
\end{equation}

For higher order integrals we define

\begin{equation}\label{cc}
  (~^{*}I_{\alpha}^a f)(t)=~_{a}I_*^{n+1}(f(t)^{(t-a)^{\beta-1}}).
\end{equation}

\begin{equation}\label{dd}
  (~^{b}_{\alpha}I^*f)(t)=~_{*}I_b^{n+1}(f(t)^{(b-t)^{\beta-1}}).
\end{equation}
Notice that when $n=0$ or $0<\alpha \leq 1$, the definitions coincide with the previous ones.

To confirm our definitions, we state the following proposition describing the action of the multiplicative conformable fractional derivative and integral operators on each other. This result will be useful in solving multiplicative conformable fractional dynamical systems by transforming them to multiplicative integral ones.
\begin{proposition} \label{above} Let $f$ be a positive real-valued function defined on $[a,b]$ and $0< \alpha \leq 1$. Then
\begin{itemize}
  \item a) $(^{*}T_{\alpha}^a~^{*}I_{\alpha}^a f)(t)=f(t)$, for $f$ is continuous.
  \item b)$(_{\alpha}^{b}T^*~^{b}_{\alpha}I^* f)(t)=f(t)$, for $f$ is continuous.
  \item c)$(I_{\alpha}^a ~^{*}T_{\alpha}^a~^{*} f)(t)=\frac{f(t)}{f(a)}$.
  \item d)$(~^{b}_{\alpha}I^*~ _{\alpha}^{b}T^*f)(t)=\frac{f(t)}{f(b)}$.
\end{itemize}
\end{proposition}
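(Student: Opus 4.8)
The plan is to reduce each of the four identities to the corresponding inversion formula of ordinary conformable calculus by conjugating with the logarithm. Set $g=\ln\circ f$; since $f$ is positive (and, in parts (a)--(b), continuous), $g$ is a well-defined (continuous) real-valued function. Writing $T_{\alpha}^{a}$ and $I_{\alpha}^{a}$ for the classical left conformable derivative and integral, $(I_{\alpha}^{a}u)(t)=\int_{a}^{t}(x-a)^{\alpha-1}u(x)\,dx$, and $_{\alpha}^{b}T$, $_{\alpha}^{b}I$ for their right counterparts, Definitions (\ref{confleft})--(\ref{m conf integ r}) say precisely that $(^{*}T_{\alpha}^{a}h)(t)=e^{(T_{\alpha}^{a}\ln\circ h)(t)}$ and $(^{*}I_{\alpha}^{a}h)(t)=e^{(I_{\alpha}^{a}\ln\circ h)(t)}$, and similarly on the right. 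The key structural observation is that these lifts are functorial: $^{*}T_{\alpha}^{a}(e^{u})=e^{T_{\alpha}^{a}u}$ and $^{*}I_{\alpha}^{a}(e^{u})=e^{I_{\alpha}^{a}u}$, so that any composition of multiplicative conformable operators applied to $f$ equals the exponential of the same composition of classical conformable operators applied to $g$.

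With this in place, part (a) is immediate: $(^{*}T_{\alpha}^{a}\,^{*}I_{\alpha}^{a}f)(t)=e^{(T_{\alpha}^{a}I_{\alpha}^{a}g)(t)}$, and by the fundamental inversion property of conformable calculus (\cite{Roshdi}, \cite{Thabet conf}) one has $(T_{\alpha}^{a}I_{\alpha}^{a}g)(t)=g(t)$ whenever $g$ is continuous, so the expression equals $e^{g(t)}=f(t)$. Part (b) is the mirror image, using the right inversion $(_{\alpha}^{b}T\,_{\alpha}^{b}I\,g)(t)=g(t)$. For part (c), the same reduction gives $(^{*}I_{\alpha}^{a}\,^{*}T_{\alpha}^{a}f)(t)=e^{(I_{\alpha}^{a}T_{\alpha}^{a}g)(t)}$, and here the relevant classical identity is $(I_{\alpha}^{a}T_{\alpha}^{a}g)(t)=g(t)-g(a)$ for differentiable $g$, so the right-hand side is $e^{g(t)-g(a)}=e^{g(t)}/e^{g(a)}=f(t)/f(a)$. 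Part (d) follows in the same way from $(_{\alpha}^{b}I\,_{\alpha}^{b}T\,g)(t)=g(t)-g(b)$, yielding $f(t)/f(b)$.

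The only genuine work is bookkeeping rather than mathematics. One must verify the functoriality claims directly from the definitions: $^{*}T_{\alpha}^{a}(e^{u})=e^{T_{\alpha}^{a}u}$ via the L'Hopital computation already recorded after (\ref{confright}) (which identifies $(^{*}T_{\alpha}^{a}h)(t)$ with $e^{(T_{\alpha}^{a}\ln\circ h)(t)}$), and $^{*}I_{\alpha}^{a}(e^{u})=e^{I_{\alpha}^{a}u}$ by unwinding the exponential-of-integral in (\ref{m conf integ l}) and (\ref{m conf integ r}). One must also check that the regularity hypotheses transported to $g=\ln\circ f$ — continuity of $g$ for (a)--(b), differentiability for (c)--(d) — are exactly those required by the classical conformable inversion lemmas, which holds since $x\mapsto\ln x$ is smooth on $\mathbb{R}^{+}$ and $f>0$. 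I expect no hidden difficulty: the multiplicative conformable calculus here is, by construction, the classical conformable calculus conjugated by $\exp$, so once the lift is shown to commute with both operators the four identities are one-line consequences.
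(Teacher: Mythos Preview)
Your proposal is correct and follows essentially the same approach as the paper: both proofs conjugate by $\exp/\ln$ to reduce the multiplicative identities to the classical conformable inversion lemmas $(T_{\alpha}^{a}I_{\alpha}^{a}g)(t)=g(t)$ and $(I_{\alpha}^{a}T_{\alpha}^{a}g)(t)=g(t)-g(a)$ from \cite{Thabet conf}. The only cosmetic difference is that you phrase the reduction once as a general functoriality principle, whereas the paper unwinds the chain of equalities explicitly in each case (and for part (c) computes $(I_{\alpha}^{a}T_{\alpha}^{a}g)(t)=g(t)-g(a)$ by hand via $\int_a^t \frac{f'(x)}{f(x)}\,dx$ rather than citing it).
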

\begin{proof}
We prove (a) and (c). Parts (b) and (d) are similar.
\begin{itemize}
  \item \textbf{The proof of (a)}: By definition and the help of the fact that $(T_{\alpha}^a~I_{\alpha}^a g)(t)=g(t)$ (see Lemma 2.1 in \cite{Thabet conf}) we have

      \begin{eqnarray}
       \nonumber
        (^{*}T_{\alpha}^a~^{*}I_{\alpha}^a f)(t) &=& e^{T_{\alpha}^a ln(^{*}I_{\alpha}^af(t))}= e^{T_{\alpha}^a \int_a^t (x-a)^{\alpha-1} ln(f(x))dx}\\
         &=&  e^{T_{\alpha}^a I_{\alpha}^a(ln(f(t))}=e^{ln(f(t))}=f(t).
      \end{eqnarray}
  \item \textbf{The proof of (c)}: By definition and the help of Lemma 2.8 in \cite{Thabet conf}, we have

  \begin{eqnarray}
   \nonumber
     (~^{*}I_{\alpha}^a~^{*}T_{\alpha}^a f)(t) &=& e^{\int_a^t (x-a)^{\alpha-1}ln(^{*}T_{\alpha}^a f)(x)dx}= e^{\int_a^t (x-a)^{\alpha-1}T_{\alpha}^a ln(f(x)) dx}\\
    e^{\int_a^t (x-a)^{\alpha-1} (x-a)^{1-\alpha} \frac{f^\prime(x)}{f(x)} dx} &=&  e^{ln(f(t))-ln(f(a))}=\frac{f(t)}{f(a)}.
  \end{eqnarray}
\end{itemize}
\end{proof}

Proposition \ref{above} above can be generalized for the higher order case. For example parts (a) and (b) are still true for arbitrary $\alpha$ if we noticed that $~_{a}I^1_*(f(x)^{(x-a)^{\beta-1}})(t)=(~^{*}I_{\beta}^a f)(t)$ and $~^{*}I_b^1(f(x)^{(b-x)^{\beta-1}})(t)=(~_{b} ^{\beta}I^* f)(t)$.

Next theorem will generalize the parts (b) and (d) of Proposition \ref{above}.
\begin{theorem} \label{finalt}
Assume $\alpha \in (n,n+1]$ , $\beta=\alpha-n$, and $f$ a positive real-valued function then which is $n+1$ times differentiable.Then
\begin{itemize}
  \item a)$(~^{*}I_{\alpha}^a ~^{*}\textbf{T}_{\alpha}^a f)(t)= \frac{f(t)}  { \prod_{k=0}^n e^{ \frac{g^{(k)}(a) (t-a)^k} {k!}} }$,
  \item b)$(~^{b}_{\alpha}I^* ~^{*}\textbf{T}_{\alpha}^a f)(t)= \frac{f(t)}  { \prod_{k=0}^n e^{ (-1)^k\frac{g^{(k)}(b) (b-t)^k} {k!}} }$,
\end{itemize}
where $g(t)=ln(f(t))$.
\end{theorem}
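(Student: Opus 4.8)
The plan is to reduce everything, exactly as in the Caputo section, to the classical higher order conformable fractional calculus applied to $g=\ln\circ f$. The first step is to record that the higher order multiplicative conformable operators lift through the logarithm in the same way the Riemann--Liouville and Caputo ones do. From \eqref{cc}, \eqref{e1} and the definition of $~_{a}I_*^{n+1}$, for any positive $h$ one gets $\ln\big((~^{*}I_{\alpha}^{a}h)(t)\big)=\big(~_{a}I^{n+1}[(x-a)^{\beta-1}\ln h(x)]\big)(t)=(I_{\alpha}^{a}(\ln\circ h))(t)$, where $I_{\alpha}^{a}$ is the classical higher order left conformable integral of \cite{Thabet conf} (recall $I_{\alpha}^{a}=\,_{a}I^{n}\circ I_{\beta}^{a}$, so that its kernel is $\tfrac{(t-x)^{n}}{n!}(x-a)^{\beta-1}$); the same computation with \eqref{dd} gives $\ln\big((~^{b}_{\alpha}I^{*}h)(t)\big)=(~^{b}_{\alpha}I^{*}(\ln\circ h))(t)$. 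On the derivative side \eqref{aa} and \eqref{bb} already read $\ln\big((~^{*}\textbf{T}_{\alpha}^{a}f)(t)\big)=T_{\beta}^{a}\tfrac{d^{n}}{dt^{n}}g(t)=(\textbf{T}_{\alpha}^{a}g)(t)$ and $\ln\big((~^{b}_{\alpha}\textbf{T}^{*}f)(t)\big)=(~^{b}_{\alpha}\textbf{T}^{*}g)(t)$.

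Composing these identities turns part (a) into
\[
(~^{*}I_{\alpha}^{a}~^{*}\textbf{T}_{\alpha}^{a}f)(t)=\exp\!\Big((I_{\alpha}^{a}\,\textbf{T}_{\alpha}^{a}g)(t)\Big),
\]
and then I would invoke the classical higher order conformable inversion formula from \cite{Thabet conf},
\[
(I_{\alpha}^{a}\,\textbf{T}_{\alpha}^{a}g)(t)=g(t)-\sum_{k=0}^{n}\frac{g^{(k)}(a)}{k!}(t-a)^{k},
\]
valid since $f$, hence $g=\ln\circ f$, is $n+1$ times differentiable. If one prefers a self-contained argument, write $\textbf{T}_{\alpha}^{a}g=T_{\beta}^{a}g^{(n)}$, use $(I_{\beta}^{a}T_{\beta}^{a}g^{(n)})(t)=g^{(n)}(t)-g^{(n)}(a)$ (immediate from the definitions of $I_{\beta}^{a}$ and $T_{\beta}^{a}$, as in the proof of Proposition \ref{above}(c)), then apply $~_{a}I^{n}$ and the Cauchy formula $~_{a}I^{n}g^{(n)}=g(t)-\sum_{k=0}^{n-1}\tfrac{g^{(k)}(a)}{k!}(t-a)^{k}$, which absorbs the extra $-g^{(n)}(a)\tfrac{(t-a)^{n}}{n!}$ term into the $k=n$ summand. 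Exponentiating and using $e^{g(t)}=f(t)$ together with the factorization \eqref{finall} yields
\[
(~^{*}I_{\alpha}^{a}~^{*}\textbf{T}_{\alpha}^{a}f)(t)=\frac{f(t)}{\prod_{k=0}^{n}e^{\frac{g^{(k)}(a)(t-a)^{k}}{k!}}},
\]
which is (a). Part (b) is the mirror image with the right operators, replacing the inversion formula by $(~^{b}_{\alpha}I^{*}\,~^{b}_{\alpha}\textbf{T}^{*}g)(t)=g(t)-\sum_{k=0}^{n}(-1)^{k}\tfrac{g^{(k)}(b)}{k!}(b-t)^{k}$, which produces the alternating signs in the denominator.

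The bookkeeping of the exponential/logarithm lifting is routine once \eqref{cc}--\eqref{dd} and \eqref{aa}--\eqref{bb} are rewritten in logarithmic form; no new estimate is needed. The one point I would be careful about is the precise statement and hypotheses of the classical higher order conformable inversion identity in \cite{Thabet conf}: one must make sure it carries exactly $n+1$ boundary terms ($k=0,\dots,n$, not $0,\dots,n-1$) and that it applies to functions which are merely $n+1$ times differentiable, matching the hypothesis of the theorem. Getting that bound of the summation and that regularity class to line up — rather than any genuinely hard analysis — is the main obstacle.
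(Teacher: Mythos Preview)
Your proposal is correct and follows essentially the same route as the paper: lift both operators through the logarithm to their classical conformable counterparts acting on $g=\ln\circ f$, cancel the $(x-a)^{\beta-1}(x-a)^{1-\beta}$ factors, and reduce to the Taylor-type identity $~_{a}I^{n+1}g^{(n+1)}(t)=g(t)-\sum_{k=0}^{n}\tfrac{g^{(k)}(a)}{k!}(t-a)^{k}$. Your self-contained version (first $I_{\beta}^{a}T_{\beta}^{a}g^{(n)}=g^{(n)}-g^{(n)}(a)$, then $~_{a}I^{n}$) is exactly that computation unpacked one step at a time.
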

\begin{proof}
We prove (a) only. The other part is similar.
\begin{eqnarray}
\nonumber
  (~^{*}I_{\alpha}^a ~^{*}\textbf{T}_{\alpha}^a f)(t)&=& ~_{a}I_*^{n+1} ( [~^{*}\textbf{T}_{\alpha}^a f(x)]^{(x-a)^{(\beta-1)}})(t)= ~_{a}I_*^{n+1} ( [~^{*}T_{\beta}^a f_*^{(n)}(x)]^{(x-a)^{(\beta-1)}})(t)\\
   &=& e^{(~_{a}I^{n+1}(x-a)^{\beta-1}ln(~^{*}T_{\beta}^af_*^{(n)}(x))(t)}= e^{(~_{a}I^{n+1}(x-a)^{\beta-1} T_{\beta}^a \frac{d^n ln(f(x))}{dx^n})(t)}\\ \nonumber
   &=& e^{(~_{a}I^{n+1}(x-a)^{\beta-1} (x-a)^{1-\beta} \frac{d^{n+1} ln(f(x))} {dx^{n+1}} )(t)} \\ \nonumber
   &=& e^{g(t)-\sum_{k=0}^n \frac{ g^{(k)}(a) (t-a)^k } {k!} }=\frac{f(t)}  { \prod_{k=0}^n e^{ \frac{g^{(k)}(a) (t-a)^k} {k!}} }
\end{eqnarray}
\end{proof}

\end{document}